\newtheorem{theorem}{Theorem}[section]
\newtheorem{lemma}[theorem]{Lemma}
\theoremstyle{definition}
\newtheorem{cor}[theorem]{Corollary}
\theoremstyle{remark}
\numberwithin{equation}{section}
\newcommand{\RR}{{\mathbb R}}
\newcommand{\ZZ}{{\mathbb Z}}
\begin{document}
\title{Inverting the wedge map and Gauss composition}

\author{Kok Seng Chua}
\email{chuakkss52@outlook.com}

\subjclass[2000]{Primary : 11-02, Secondary : 11E16,11Y16}

\keywords{Gauss's composition, wedge product, quadratic forms,
Bhargava's cube, Pl\"{u}cker coordinates, integral Grassmannian}

\begin{abstract}
Let $1 \le k \le n,$ and let $v_1,\ldots,v_k$ be integral vectors in
$\ZZ^n$. We consider the wedge map $\alpha_{n,k} : (\ZZ^n)^k
/SL_k(\ZZ) \rightarrow \wedge^k(\ZZ^n)$, $(v_1,\ldots,v_k)
\rightarrow v_1 \wedge \cdots \wedge v_k $. In his Disquisitiones,
Gauss proved that $\alpha_{n,2}$ is injective when restricted to a
primitive system of vectors when defining his composition law for
binary quadratic forms. He also gave an algorithm for inverting
$\alpha_{3,2}$ in a different context on the representation of
integers by ternary quadratic forms. We give here an explicit
algorithm for inverting $\alpha_{n,2}$, and observe via Bhargava's
composition law for $\ZZ^2 \otimes \ZZ^2 \otimes \ZZ^2 $ cube that
inverting $\alpha_{4,2}$ is the main algorithmic step in Gauss's
composition law for binary quadratic forms. This places Gauss's
composition as a special case of the geometric problem of inverting
a wedge map which may be of independent interests. We also show that
a given symmetric positive definite matrix $A$ induces a natural
metric on the integral Grassmannian $G_{n,k}(\ZZ)$ so that the map
$X \rightarrow X^TAX$ becomes norm preserving.
\end{abstract}

\maketitle

\section{Introduction}
Let $1 \le k \le n$ and $ v_1,\ldots, v_k \in \ZZ^n$, consider the
$k$-wedge product map
$$X=(v_1, \cdots ,v_k) \rightarrow Y= v_1 \wedge \cdots \wedge v_k \in
\wedge^k \ZZ^n \cong \ZZ^{n_k},\; n_k=\begin{pmatrix} n \cr k
\end{pmatrix}$$ where for the index $I=i_1 \cdots i_k, \;\; 1 \le i_1 <i_2 <
\cdots < i_k \le n,$  the $I$th Pl\"{u}cker coordinate of $Y$ is
given by
\begin{equation} Y_I= \det \begin{pmatrix} (v_1)_{i_1} & \cdots
(v_k)_{i_1} \cr ... & ... \cr
 (v_1)_{i_k} & \cdots (v_k)_{i_k}  \end{pmatrix}.
 \end{equation}

 Since for any $k \times k$
matrix $A=(a_{ij})$, we have
 \begin{equation} (\sum_{j} a_{1j}v_j)
\wedge \cdots \wedge (\sum_j a_{kj} v_j) =(\det A) \;\; v_1 \wedge
\cdots  \wedge v_k, \end{equation}
 the wedge map descends to a
map of the orbits of $(\ZZ^n)^k \cong \ZZ^{nk}$ (identifying $X$
with a $n \times k$ integral matrix) under right multiplication by
$SL_k(\ZZ)$, and writing $G_{n,k}(\ZZ):=Im(\alpha_{n,k})$,
\begin{equation}
  \alpha_{n,k}: (\ZZ^n)^k/SL_k(\ZZ) \rightarrow G_{n,k}(\ZZ) \subset \wedge^k\ZZ^n.
\end{equation}
In general $\alpha_{n,k}$ is not one-one since for example, we have
$$(1,2,2)^T \wedge (1,0,4)^T=(0,1,-1)^T \wedge (2,0,8)^T =(8,2,-2)^T,$$
and the two pairs of bi-vectors  are not $SL_2$ equivalent, but note
that the vector $(8,2,-2)^T$ is not primitive. In general we will
say that a set of $k$ vectors $v_1,\cdots , v_k$ in $\ZZ^n$ forms a
primitive system if the gcd of the $n \choose k$ components of $v_1
\wedge \cdots \wedge v_k$ is one.

In his Disquisitiones (\cite{G} Art. 234), where he defined his
 composition law for binary quadratic forms, Gauss proved that the restriction of
 $ \alpha_{n,k} $ to  primitive systems of  vectors, which we denote by $\alpha_{n,k}^*$ is injective in
 the case $k=2$. This actually holds for all $k$ and we prove the
 case $k=3$ in Lemma 3.4 below which constructs an explicit $SL_3$
 transformation relating any two pre-images in $(\ZZ^n)^3$. The
 construction generalizes obviously for arbitrary $k$. Note that
 geometrically this just says that the "Pl\"{u}cker embedding" is
 indeed an embedding over Spec $\ZZ$.

 In his work on the representation of integers by ternary quadratic forms (\cite{G} Art.279), Gauss gave an
 algorithm for inverting $\alpha_{3,2}$. This is used in the
 construction of a
 correspondence between the representation of a binary quadratic form by the
 adjoint of a ternary quadratic form and the representation of the determinant of
 the binary form by the ternary form. This led him to his famous result
 expressing the number of representations of an integer as a sum of
 3 squares in term of class numbers. Inverting
 $\alpha_{n,n-1}$ is algorithmically the computation of  the kernel of a
 rank one integral matrix which may be effected by the Hermite normal form algorithm (see section 2.2).


In general $G_{n,k}(\ZZ)=Im(\alpha_{n,k})$ is only a proper subset
of $\wedge^k\ZZ^n$ when $1<k<n-1$, but a simple parameter count (see
comment after corollary 3.5) shows that $G_{n,k}(\ZZ)$ lies inside a
subvariety of codimension
 $d_{n,k}:=\begin{pmatrix} n \cr k
\end{pmatrix}-(kn-(k^2-1))$ of $\wedge^k\RR^n$. This is just the
codimension of the Grassmannian in its Pl\"{u}cker embedding.
Clearly, $d_{n,k}=d_{n,n-k}=0$ for $k=1,n-1$ and $d_{n,k}=1$  occurs
only once for $n=4,k=2$. This is perhaps the most interesting case
since the single relation allows one to impose a group law, and this
is exactly Gauss's composition law for binary quadratic forms or
more generally Bhargava's \cite{B} composition law for $\ZZ^2
\otimes \ZZ^2 \otimes \ZZ^2$ cubes. Indeed the
 invariance of the discriminants of the binary forms $Q_i^A,i=1,2,3$ attached to a Bhargava's
  $ 2\times 2\times 2$ cube $A$ is
 equivalent to $Im(\alpha_{4,2})$ lying on the  Pfaffian
 subvariety
 \begin{equation} X_{12}X_{34}-X_{13}X_{24}+X_{14}X_{23}=0,
 \end{equation}
 of $\ZZ^6$.

So the algorithmic  problem of
 given any two binary quadratic forms $Q_i(x,y),i \in \{2,3\}$ of the same
discriminant, to find a Bhargava's cube $A$ so that $Q_i^A=Q_i$ (see
 section 3.2) is equivalent to inverting $\alpha_{4,2}$. Both D. Shanks
 (see \cite{L}  F. Lemmermeyer )  and  \cite{P} Alf van der Poorten have considered
this algorithmic problem and \cite{P} also observed that it is
equivalent to inverting $\alpha_{4,2}$. In section 3.1 (Theorem
3.2), we give an explicit
 algorithm for inverting $\alpha_{n,2}$ which involves only a
 single GCD calculation and for general $n$, and this appears to be new.
By specializing, it gives rise to an explicit
 composition law for $\ZZ^2 \otimes \ZZ^2 \otimes \ZZ^2 $ cube and in particular recovers an old explicit rule for composing binary quadratic forms given
 by Arndt (see Theorem 3.3).

As Bhargava \cite{B} showed, the above  gives an algorithm for
composing $ 2\times 2\times 2 $ cubes (Theorem 2 \cite{B}) which
additionally gives rise to  algorithms for four new composition
laws.

In section 4, we show that Gauss's argument on inverting
$\alpha_{3,2}$ may be generalized so that any given symmetric
positive definite matrix $A$ induces a natural metric on
$G_{n,k}(\ZZ)$ so that the map $X \rightarrow X^TAX$ becomes norm
preserving, in the sense that an $X$ of $\hat{A}$ norm $m$ is mapped
to a $k-ary$ form of determinant $m$.

\section{Duality and Inverting $\alpha_{n,n-1}$}

\subsection{Notations and Duality}
 We will write $\wedge^k \ZZ^n$ for the products of
$\begin{pmatrix} n \cr k \end{pmatrix}$ copies of $\ZZ$ indexed by
$I=i_1,\ldots,i_k$ with $1 \le i_1 < i_2 < \cdots < i_k \le n$ and
we let $|I|=k$, $||I||=i_1+\cdots+i_k$. We also write
$G_{n,k}(\ZZ):=Im(\alpha_{n,k}) \subset \wedge^k\ZZ^n$ for
 the subset of vectors expressible as $Y=v_1 \wedge \cdots \wedge v_k$.

There is a map
$$\wedge^k\ZZ^n \rightarrow
\wedge^{n-k}\ZZ^n : Y \rightarrow \hat{Y},\;\;\;
\hat{Y}_I=(-1)^{||\hat{I}||}Y_{\hat{I}},$$
 where for any index $I$,
$\hat{I}=J=j_1\ldots j_{n-k}$ is the dual index with $I \cup
J=\{1,\ldots,n\}$ and $j_i$ increasing. Note that $\hat{\hat{I}}=I$
but $\hat{\hat{Y}}=(-1)^{(n(n+1))/2}Y$. We also write $gcd(Y)$ for
$gcd\{ Y_I: |I|=k\}$. We will always identify an $X=(v_1,\ldots,v_k)
\in (\ZZ^n)^k$ with the $n \times k$ matrix $X=[v_1 \cdots v_k]$ and
we will write $X_I$ for its $I$-th Pl\"{u}cker coordinate as given
in (1.1). We will say that $X$ is primitive system  if
$gcd_I(X_I)=1$ and we note that this is equivalent to
$\alpha_{n,k}(X)$ is a primitive vector in $\wedge^k\ZZ^n$ in the
usual sense. Note that a necessary and sufficient condition for
primitivity of $X$ is that it should be
 extendable to a $GL_n(\ZZ)$ matrix (see \cite{S}).

By the Cauchy-Binet identity \cite{CB}, we have a semidefinite
pairing on $(\ZZ^n)^k$ given by
\begin{equation}
<X,Y>:=\det (X^TY)=\sum_{|I|=k} X_I Y_I.
\end{equation}
The semidefiniteness follows from a Pythagoras theorem :
$$||X||^2 := \det(X^TX)=\sum_{|I|=k} (X_I)^2,$$
which  says that the square of the volume of the parallelepiped
 spanned by $v_1,\ldots,v_k$ in $\RR^n$ is the sum
of squares of the volume of its projections on the coordinate $k$-
planes. Moreover we have
$$<X,\hat{Y}>=<\hat{X},Y>$$ for $X \in (\ZZ^n)^k,Y \in
(\ZZ^n)^{n-k}$, though there is no linearity so that $<.,.>$ is
only a  semi-metric.

If in (2.1), $\alpha_{n,k}(X) =v_1 \wedge \cdots \wedge v_k \in
G_{n,k}(\ZZ)$ and  $ \alpha_{n,n-k}(Y)=w_1 \wedge w_2 \cdots \wedge
w_{n-k} \in G_{n,k}(\ZZ),$ are decomposable vectors of complementary
dimensions then
\begin{equation}
<X,\hat{Y}>=\det([v_1,\ldots,v_k,w_1,\ldots,w_{n-k}])=X \wedge Y,
\end{equation} by
the generalized column expansion of determinant.

\subsection{Inverting $\alpha_{n,n-1}$}
We will consider algorithms for inverting $\alpha_{n,k}$, by which
we mean finding any one of its pre-images. Note that by Lemma 3.4 we
can get all the images via an $SL_k$ transformation once we have
found one.

 We consider first the co-dimension one case.
Let $v_1,\dots,v_{n-1} \in \ZZ^n$, by (2.2),
$$<v_1 \wedge \cdots \wedge v_{n-1}, \hat{v_j}>=\det([v_1,\cdots v_{n-1},v_j])=0,$$
 for $1 \le j \le n-1$. This says $v_1 \wedge \cdots \wedge v_{n-1}$ is orthogonal to
 all the $v_j$ . So given $ 0 \neq v \in \wedge^{n-1}\ZZ^n \cong
\ZZ^n, gcd(v)=1$, the algorithm for inverting $\alpha_{n,n-1}$ is to
first find an integral basis $\{v_1,\ldots,v_{n-1}\}$ for the
rank-one system $v^Tx=0$ and we will have $\alpha_{n,n-1}(v_1 \wedge
\cdots \wedge v_{n-1})=v$. If $gcd(v)=\lambda$, one simply replaces
$v_1$ by $\lambda v_1$. So the problem reduces to computing the
kernel of a single row integral matrix.

Computing a basis of the kernel of an integral matrix $A$ can be
effected by the well known algorithm for computing  Hermite normal
form. We transform $A$ to HNF : $H=AU$ where $U \in GL_n(\ZZ)$,
then the columns of $U$ corresponding to zero columns of $H$ gives
a basis (see for example, \cite{C} Proposition 2.4.9). In PARI,
all information can be read off by a single command
$a=mathnf(A,2)$.

The above algorithm for inverting $\alpha_{n,n-1}$ should be
viewed as an algorithm for inverting $\alpha_{n,n-k}$ given that
one can invert $\alpha_{n,k}$ as given by the next lemma .

\begin{lemma} Let $0< k < n-k$ and  given $w_1,\cdots,w_k \in \ZZ^n$
such that $W=w_1 \wedge \cdots \wedge w_{k} \in \wedge^k(\ZZ^n)$
is not the zero vector. Then one can find vectors $v_1 ,\cdots,
v_{n-k} \in \ZZ^n$ with $v_1 \wedge \cdots \wedge v_{n-k}=
\hat{W}$.
\end{lemma}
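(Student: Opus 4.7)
The plan is to construct $v_1, \ldots, v_{n-k}$ as a $\ZZ$-basis for the saturated sublattice of $\ZZ^n$ orthogonal to the $w_j$'s, and then rescale $v_1$ by an integer factor. The underlying geometric idea is that the sign convention $\hat{Y}_I = (-1)^{||\hat{I}||} Y_{\hat{I}}$ makes $Y \mapsto \hat{Y}$ behave like a Hodge star: a nonzero decomposable $k$-vector $W$ represents the $k$-dimensional $\QQ$-subspace $V_W := \text{span}_\QQ(w_1, \ldots, w_k)$, and $\hat{W}$ represents its orthogonal complement $V_W^\perp$. This can be checked directly, or read off from the identity $\langle X, \hat{Y}\rangle = X \wedge Y$ of (2.2), which shows that a vector $v \in \ZZ^n$ satisfies $v^T w_j = 0$ for every $j$ precisely when the corresponding $v \wedge W = 0$ under the duality.

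Concretely, first I would form the $k \times n$ integer matrix $M$ with rows $w_j^T$. Since $W \neq 0$, $M$ has full row rank $k$, and so its integer kernel $L := \{v \in \ZZ^n : Mv = 0\}$ is a \emph{saturated} sublattice of $\ZZ^n$ of rank $n-k$. A $\ZZ$-basis $u_1, \ldots, u_{n-k}$ for $L$ is produced by the Hermite normal form algorithm recalled at the end of section 2.2. Because $L$ is saturated, the $u_i$'s extend to a $\ZZ$-basis of $\ZZ^n$ (by the extendability criterion cited from \cite{S} in section 2.1), and hence $U := u_1 \wedge \cdots \wedge u_{n-k}$ is a primitive vector in $\wedge^{n-k}\ZZ^n$. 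Since $U$ and $\hat{W}$ are both nonzero decomposable $(n-k)$-vectors representing the same $\QQ$-subspace $V_W^\perp$, they are $\QQ$-proportional; primitivity of $U$ together with integrality of $\hat{W}$ forces the proportionality constant to be an integer $m$, and setting $v_1 := m u_1$ and $v_i := u_i$ for $2 \le i \le n-k$ gives $v_1 \wedge \cdots \wedge v_{n-k} = \hat{W}$.

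The step I expect to be the main obstacle is establishing that the scalar $m$ is actually an integer. This hinges critically on saturation of the integer kernel: without it, $U$ would only generate a sublattice of strictly positive index in the rational line spanned by $\hat{W}$ inside $\wedge^{n-k}\ZZ^n$, forcing $m$ to be a non-integer rational and breaking the rescaling. Once saturation is in hand, everything else is bookkeeping for the Hermite normal form computation together with the elementary fact that a nonzero decomposable vector is determined by its associated subspace up to a scalar.
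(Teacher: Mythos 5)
Your proof is correct, and the construction is the same as the paper's: compute a $\ZZ$-basis $u_1,\ldots,u_{n-k}$ of the integer kernel of the $k\times n$ matrix with rows $w_j^T$ via Hermite normal form, observe that $U=u_1\wedge\cdots\wedge u_{n-k}$ is primitive, deduce $\hat{W}=mU$ with $m\in\ZZ$, and absorb $m$ into $v_1$. (Your single integer $m$ handles both the gcd and the sign at once, which is slightly tidier than the paper's normalization to $\gcd(W)=1$ followed by the conclusion $\hat{W}=\pm U$.) Where you genuinely diverge is the justification of the proportionality. The paper never assumes $\hat{W}$ is decomposable: it exhibits an explicit family of decomposable vectors built from the $v_i$'s and $w_j$'s to which both $\hat{W}$ and $U$ are orthogonal under the pairing of (2.1)--(2.2), and a Vandermonde identity shows these span a codimension-one subspace, forcing $\hat{W}$ and $U$ onto a common line. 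You instead invoke the fact that $Y\mapsto\hat{Y}$ acts, up to a global sign, as the Hodge star, so that $\hat{W}$ is itself a nonzero decomposable $(n-k)$-vector representing the orthogonal complement of $\mathrm{span}_{\QQ}(w_1,\ldots,w_k)$, and then use injectivity of the Pl\"{u}cker embedding over $\QQ$. That fact is true, but it is precisely the nontrivial content here (it amounts to $\hat{W}$ lying on the Pl\"{u}cker variety), and your parenthetical appeal to (2.2) does not deliver it: the condition $v\wedge W=0$ characterizes $v\in\mathrm{span}(w_1,\ldots,w_k)$, not $v\perp w_j$ for all $j$. The standard direct check---an orthonormal basis of $\RR^n$ adapted to $\mathrm{span}_{\RR}(w_1,\ldots,w_k)$ together with the $O(n)$-equivariance of the star operator---should be supplied or cited; with that in place, your argument is complete, and the rest (saturation of the kernel, extendability to a basis of $\ZZ^n$, integrality of $m$) is handled correctly.
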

\begin{proof} We can clearly assume $gcd(W)=1$ as before. We
compute a basis $v_1,\ldots,v_{n-k}$ for the kernel of the $k
\times n$ matrix with the $w_i$ as rows by a HNF algorithm so that
 $w_i \cdot v_j=0,i=1,\ldots,k,j=1,\ldots,n-k,\; gcd(v_1 \wedge \cdots \wedge v_{n-k})=1$. By (2.2),
 both $\hat{W}$ and $V=v_1 \wedge ... \wedge v_{n-k}$ are orthogonal to the adjoint of
$$v_{i_1} \wedge \cdots \wedge v_{i_k}, \;
(v_{i_1} \wedge \cdots \wedge v_{i_{k-1}}) \wedge
w_{j_1},\;(v_{i_1} \wedge \cdots \wedge v_{i_{k-2}})\wedge
(w_{j_1} \wedge w_{j_2}), \cdots, v_{i_1} \wedge (w_{j_1} \wedge
\cdots \wedge w_{j_{k-1}})$$ which spans a space of codimension
$\begin{pmatrix} n \cr n-k
\end{pmatrix}-
\sum_{j=0}^{k-1}\begin{pmatrix} n-k \cr k-j \end{pmatrix}
\begin{pmatrix} k \cr j \end{pmatrix} =1$ in $\wedge^k\RR^n$
so that $\hat{W}= \pm v_1 \wedge \cdots \wedge v_{n-k}$ since both
have components with gcd 1.
\end{proof}
Since inverting $\alpha_{n,1}$ is trivial, we get the algorithm
for inverting $\alpha_{n,n-1}$ above. For $k>1$ one needs to first
determine which points in $\wedge^k\ZZ^n$ are decomposable and a
separate algorithm is needed. We will do this first for $k=2$
which tells us how to generalize to arbitrary $k$.

\section{Inverting $\alpha_{n,2}$ and constructing Bhargava's
cube}

\subsection{Inverting $\alpha_{n,2}$} If $x=(x_1,x_2,x_3,x_4)^T, y=(y_1,y_2,y_3,y_4)^T \in \ZZ^4$ and
$X=x \wedge  y \in \ZZ^6$, a simple calculation shows that
$$X_{12}X_{34}+X_{14}X_{23}=X_{13}X_{24}.$$
It follows that $G_{n,2}(\ZZ)$ must lie in the intersections given
by the Pl\"{u}cker relations
\begin{equation}
X_{ij}X_{kl}-X_{ik}X_{jl}+X_{il}X_{jk}=0, \;\;\; 1 \le i < j < k<l
\le n,
\end{equation}
 of $\wedge^2\ZZ^n$. There are $\begin{pmatrix} n \cr 4
 \end{pmatrix}$ relations in (3.1), but they are clearly not independent. Given any non-zero $X$ (say with $X_{12} \neq
 0)$, we will show that (theorem 3.2 below) we can find $x, y \in \ZZ^n$ with $x \wedge y=X$ using only those relations where
 $X_{12}$ occurs :
\begin{equation}
X_{12}X_{kl}-X_{1k}X_{2l}+X_{1l}X_{2k}=0, \;\;\; 3 \le k<l \le n.
\end{equation}

  We note that this gives exactly $\begin{pmatrix} n-2 \cr 2
 \end{pmatrix}$ independent relations, which agree with the codimension $d_{n,2}$.

 We will need first a Lemma from D. Cox
 (\cite{Cox}, Lemma 3.5) who may have anticipated the geometric view as he stated his result
 in the exact determinantal form that we need.
 \begin{lemma} Let $p=(p_1, \ldots , p_r)^T, q=(q_1, \ldots,
 q_r)^T
 \in \ZZ^r$ and $y$ be an integer such that $gcd(y,p_1,
 \ldots, p_r)=1$. Then the congruence $p x \equiv q$ mod $y$ has a
 unique solution mod $y$ if and only if $p \wedge q=0$ mod $y$.
\end{lemma}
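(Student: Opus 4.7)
The plan is to prove both directions elementarily using a Bézout relation coming from the coprimality hypothesis $\gcd(y,p_1,\ldots,p_r)=1$, which supplies integers $a_1,\ldots,a_r,b$ with $\sum_i a_i p_i + by = 1$, hence $\sum_i a_i p_i \equiv 1 \pmod{y}$.

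For the necessity of $p\wedge q \equiv 0 \pmod{y}$: suppose $x$ satisfies $p_i x \equiv q_i \pmod{y}$ for every $i$. Then for any $i,j$ we have
$$p_i q_j \equiv p_i p_j x \equiv p_j q_i \pmod{y},$$
so $(p\wedge q)_{ij} = p_i q_j - p_j q_i \equiv 0 \pmod{y}$. Uniqueness is equally direct: if $x,x'$ are two solutions, then $p_i(x-x')\equiv 0 \pmod{y}$ for every $i$, and multiplying by $a_i$ and summing yields $(x-x')\sum_i a_i p_i \equiv 0 \pmod{y}$, i.e.\ $x \equiv x' \pmod{y}$ by the Bézout identity.

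For existence in the sufficiency direction, I would write down a candidate solution straight from the Bézout coefficients: set
$$x := \sum_{i=1}^r a_i q_i.$$
Then for each $j$,
$$p_j x = \sum_i a_i (p_j q_i) \equiv \sum_i a_i (p_i q_j) = q_j \sum_i a_i p_i \equiv q_j \pmod{y},$$
where the middle congruence is exactly the hypothesis $p\wedge q \equiv 0 \pmod{y}$ applied coordinatewise. This produces the required $x$.

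There is no genuine obstacle here; the only step requiring a moment of thought is recognizing that the Plücker-type congruences $p_i q_j \equiv p_j q_i \pmod{y}$ are precisely what is needed to swap $p_j q_i$ into $p_i q_j$ inside the sum, so that the Bézout factor $\sum_i a_i p_i \equiv 1$ can be pulled out. Both necessity and uniqueness rely on the same Bézout identity, and all three parts (necessity, existence, uniqueness) fit into a few lines once the identity $\sum_i a_i p_i \equiv 1 \pmod{y}$ is in hand.
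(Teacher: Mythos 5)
Your proof is correct, and it takes essentially the same route as the paper: the paper does not prove the lemma itself (it cites Cox, Lemma 3.5) but immediately records the explicit solution $x \equiv L\cdot q \pmod{y}$ obtained from a B\'ezout relation $L\cdot p + \ell y = 1$, which is exactly your candidate $x=\sum_i a_i q_i$. Your write-up simply supplies the full verification (necessity, existence, uniqueness) that the paper leaves to the reference, and all three steps check out.
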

We note that moreover $x$ in the above is easily computable by a
single GCD computation. Let $L \in \ZZ^r, \ell \in \ZZ$ be such that
$L \cdot p+ \ell y=1$. Then $L \cdot p \equiv 1 $ mod $y$, so we
have
 \begin{equation}
 x \equiv L \cdot q\;\;  mod \;\; y.
 \end{equation}

\begin{theorem}
The  Pl\"{u}cker relations (3.1) defined the integral Grassmannian
 $G_{n,2}(\ZZ)=Im(\alpha_{n,2})$  in the Pl\"{u}cker embedding
 in $\wedge^2\ZZ^n$. There is an explicit algorithm
(involving only a single GCD computation) for finding a pre-image of
any $0 \neq X \in G_{n,2}(\ZZ)$ using only $d_{n,2}$ of the
relations
 containing a particular $X_{ij}$ where $X_{ij} \neq 0$.
\end{theorem}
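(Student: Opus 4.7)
The plan is, after the harmless normalizations $\gcd(X)=1$ and (by permuting indices) $X_{12}\neq 0$, to look for a preimage of the special form $y=(0,y_2,\ldots,y_n)^T$. Since $(x\wedge y)_{1j}=x_1y_j$, this Ansatz forces $x_1\mid X_{1j}$ for every $j\ge 2$, and the natural choice is $x_1:=d=\gcd(X_{12},X_{13},\ldots,X_{1n})$ together with $y_j:=X_{1j}/d$, which automatically satisfies $\gcd(y_2,\ldots,y_n)=1$. Writing $q_j:=X_{1j}/d$, this leaves $x_2,\ldots,x_n$ to be determined from the remaining equations $(x\wedge y)_{ij}=X_{ij}$ with $i\ge 2$.

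Imposing $(x\wedge y)_{2j}=X_{2j}$ for $j\ge 3$ and clearing the factor $d$ yields the system of linear congruences
\[
x_2\,q_j \equiv X_{2j} \pmod{q_2}, \qquad j=3,\ldots,n.
\]
I would apply Cox's Lemma~3.1 with $p=(q_3,\ldots,q_n)^T$, $q=(X_{23},\ldots,X_{2n})^T$, and modulus $q_2$; the hypothesis $\gcd(q_2,\ldots,q_n)=1$ is automatic, and the solvability condition $p\wedge q\equiv 0\pmod{q_2}$ becomes, after multiplying by $d$,
\[
X_{1j}X_{2k}-X_{1k}X_{2j}\equiv 0\pmod{X_{12}}, \qquad 3\le j<k\le n,
\]
which is exactly the content of the $d_{n,2}=\binom{n-2}{2}$ Pl\"ucker relations (3.2) that involve $X_{12}$. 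Hence $x_2$ is determined mod $q_2$ by a single extended GCD through formula (3.3), and the remaining entries are then forced by $x_j=(x_2X_{1j}-x_1X_{2j})/X_{12}$ for $j\ge 3$, which are integers by construction of $x_2$.

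It remains to verify $(x\wedge y)_{ij}=X_{ij}$ for $3\le i<j\le n$: substituting the explicit formulas into $x_iy_j-x_jy_i$ produces a numerator proportional to $X_{1i}X_{2j}-X_{1j}X_{2i}$, which collapses to $X_{12}X_{ij}$ by another relation from (3.2), so the coordinate reduces to $X_{ij}$ after dividing by $x_1=d$. Because the algorithm uses only the relations (3.2) and produces a preimage whenever they hold, it simultaneously proves that these $\binom{n-2}{2}$ relations already cut out $G_{n,2}(\ZZ)$ set-theoretically inside $\wedge^2\ZZ^n$. The case $\gcd(X)=\lambda>1$ is reduced to the primitive case by solving for $X/\lambda$ and scaling $x_1$ by $\lambda$. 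The main obstacle is purely combinatorial: recognizing the correct Ansatz $y_1=0$ and matching Cox's consistency condition line-for-line with the subset of Pl\"ucker relations containing $X_{12}$; once this identification is made every subsequent step is forced and reduces to a routine determinant manipulation.
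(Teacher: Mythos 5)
Your proposal is correct and follows essentially the same route as the paper's own proof: the Ansatz $y_1=0$, $x_1=\gcd(X_{12},\ldots,X_{1n})$, $y_j=X_{1j}/x_1$, the application of Cox's Lemma 3.1 to solve $x_2q_j\equiv X_{2j}\pmod{q_2}$ with the relations (3.2) supplying the consistency condition, the forced formula for $x_j$, and the final check of the coordinates $X_{ij}$ with $3\le i<j$ via (3.2). The only cosmetic difference is your explicit reduction to $\gcd(X)=1$, which the paper's argument does not actually need since Cox's hypothesis $\gcd(q_2,\ldots,q_n)=1$ already holds by the choice of $x_1$.
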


\begin{proof} Let $X \in Im(\alpha_{n,2})$, the relations (3.1) are
clearly necessary. Without loss of generality we may assume that
 $X_{12} \neq 0$, and we will find $x, y \in \ZZ^n$ with $x \wedge
 y =X$. By the $SL_2(\ZZ)$ invariance, if we have a solution, we can bring the first two
 rows of $[x,y]$ into Hermite normal form, so we can expect to
 find $x,y$ with $y_1=0, x_1 y_2= X_{12}$ and $|x_2|<|y_2|$ which
 leads us to the following algorithm.

 We first compute integers $\lambda_j$ (and this is the only computation
 needed) such that
 \begin{equation}
 x_1 = gcd(X_{12},\ldots,X_{1n}) =\sum_{j=2}^n \lambda_j X_{1j},
 \end{equation}
 and set
 $$
 y_1 = 0,\;\;\; y_j=\frac{X_{1j}}{x_1}, j=2, \ldots, n.
 $$
  We then let $p=(y_3,\ldots
 y_n)^T, q=(X_{23},\ldots, X_{2n}) \in \ZZ^{n-2}$. Since
  $gcd(y_2,y_3,\ldots, y_n)=gcd(X_{12}/x_1,\ldots, X_{1n}/x_1)=1$, and  by (3.2) for $3 \le k<l\le n$,
 $$p_kq_l-q_kp_l=\frac{X_{1k}X_{2l}-X_{1l}X_{2k}}{x_1}=\frac{-X_{12}X_{kl}}{x_1}=-y_2X_{kl},$$
 so we have $p \wedge q \equiv 0$ mod $y_2$.
By Lemma
 3.1 there is a unique $x_2$ mod $y_2$ such that
 $$ x_2 y_j \equiv X_{2j} \;\; mod \; \; y_2, j=3, \ldots n,$$
which by (3.3) and the above, is given explicitly by
\begin{equation}
  x_2=\sum_{j=3}^n \lambda_j X_{2j} \;\;mod \;\; y_2,
  \end{equation}
  where the $\lambda_j$ is computed earlier in (3.4)
so that
 \begin{equation}
 x_j:=\frac{x_2y_j-X_{2j}}{y_2} \in \ZZ, \;\; j=3,\ldots,n.
 \end{equation}
We have thus found vectors $x, y \in \ZZ^n$ with
\begin{equation}
X_{ij}=x_iy_j-x_jy_i, 1 \le i  <j \le n, i \le 2.
\end{equation}

 It then follows from (3.2) and  (3.7) that for $2< i <j  \le n$, we have

\begin{eqnarray*}
   X_{ij}&=&\frac{ X_{1i}X_{2j}-X_{1j}X_{2i}}{X_{12}}\cr &=&
\frac{(x_1y_i-y_1x_i)(x_2y_j-y_2x_j)-(x_1y_j-y_1x_j)(x_2y_i-y_2x_i))}{x_1y_2-y_1x_2}
=x_iy_j-y_ix_j.
 \end{eqnarray*}
 So we have found $x,y$ with $X=x \wedge y$ using only (3.2), but
 (3.1) must now holds since each $X_{ij}=x_iy_j-x_jy_i$.
\end{proof}

\subsection{Constructing Bhargava's cube by inverting
$\alpha_{4,2}$} We now show via Bhargava's \cite{B} composition law
on $ 2\times 2\times 2$ cubes that inverting $\alpha_{4,2}$ is the
main algorithmic step in Gauss's composition law for binary
quadratic forms. This seems to give an interesting geometric and
simple way to view this important algorithm.

Let $A$ be a Bhargava's \cite{B} $2 \times 2 \times 2 $ cube of
integers, which is an optimally symmetric description. We will fix
one of the three (say the front/back) slicing and let
\begin{equation}
M_1=\begin{pmatrix} x_1 & x_2 \cr x_3 & x_4 \end{pmatrix},\;\;
   N_1=\begin{pmatrix} y_1 & y_2 \cr y_3 & y_4 \end{pmatrix},
\end{equation}
   be the front and back faces. We also have
\begin{eqnarray}
M_2=\begin{pmatrix} x_1 & x_3 \cr y_1 & y_3
\end{pmatrix},\; \;&&
   N_2=\begin{pmatrix} x_2 & x_4 \cr y_2 & y_4 \end{pmatrix}, \cr
M_3=\begin{pmatrix} x_1 & y_1 \cr x_2 & y_2 \end{pmatrix},\;\;&&
   N_3=\begin{pmatrix} x_3 & y_3 \cr x_4 & y_4 \end{pmatrix},
 \end{eqnarray}
   corresponding to the left/right, top/bottom faces. The
   associated binary quadratic forms are
   $$ Q_i^A(x,y)= -\det(M_ix-N_iy),\;\; i=1,2,3,$$
   and these sum to zero in the class group.

Corresponding to our slicing are two vectors
$x=(x_1,x_2,x_3,x_4)^T, y=(y_1,y_2,y_3,y_4)^T \in \ZZ^4$ and their
wedge product $X= x \wedge y$ with components $X_{ij}=\det
\begin{pmatrix}  x_i & y_i \cr x_j & y_j \end{pmatrix}$ satisfying (1.4). A
calculation gives
\begin{equation} Q_1^A=(-\det M_1, \det
\begin{pmatrix} x_1 & x_2 \cr y_3 & y_4
\end{pmatrix}- \det \begin{pmatrix}  x_3 & x_4 \cr y_1 & y_2
\end{pmatrix}, -\det N_1),
\end{equation}
\begin{eqnarray}
Q_2^A &=& (-X_{13},X_{14}+X_{23},-X_{24}) , \cr
 Q_3^A &= &(-X_{12},
X_{14}-X_{23}, -X_{34}),
\end{eqnarray}
 and we note  the
invariance of $Disc(Q_i^A)$ is equivalent to (1.4).

Suppose now we are given two primitive binary quadratic forms
$Q_2=(a_2,b_2,c_2),Q_3=(a_3,b_3,c_3)$ of the same discriminant, then
the vector
\begin{equation}
 X=\begin{pmatrix} X_{12} \cr
X_{13} \cr X_{14} \cr X_{23}\cr X_{24} \cr X_{34} \end{pmatrix} :=
\begin{pmatrix} -a_3 \cr -a_2 \cr (b_2+b_3)/2 \cr (b_2-b_3)/2 \cr
-c_2 \cr -c_3
\end{pmatrix} \in Im(\alpha_{4,2}),
\end{equation}
since $X$ satisfies (1.4) by the equality of the discriminants.
 Furthermore by the primitivity of $Q_2,Q_3$, $X$ is primitive
($gcd(X)=1$).
 By our algorithm for inverting $\alpha_{4,2}$ , we can find $x, y
 \in \ZZ^4$ with $x \wedge y=X$. We can now construct a $2 \times 2 \times 2 $ cube $A$
 with $x,y$ as the front/back face and we get $Q_i^A=Q_i, i=2,3$.
 so we have found $[Q_1^A]=-([Q_2]+[Q_3])$ in the class group as Bhargava had shown.
 Also by section 3.3 below $\alpha_{4,2}^*$ is injective, so  $x,y$ and hence $Q_1^A$ is determined up
 to $SL_2(\ZZ)$ transformation, so that there is a unique composed
 class. By applying the explicit algorithm for inverting $\alpha_{4,2}$ in Theorem 3.2 to the vector
 $X$, we obtain the following simple algorithm for constructing Bhargava's
 cube which leads to an explicit algorithm for Gauss's composition
 agreeing with that given in (\cite{Bu},Theorem 4.10) attributed to Arndt.

 \begin{theorem} Given two primitive binary quadratic forms
 $Q_2=(a_2,b_2,c_2), Q_3=(a_3,b_3,c_3)$ of the same discriminant $D$. Let
 \begin{eqnarray}
 x_1&=&gcd(-a_3,-a_2,(b_2+b_3)/2) \cr &=&
  \lambda_1(-a_3)+\lambda_2(-a_2)+\lambda_3
 (b_2+b_3)/2,\;\;\; \lambda_i \in \ZZ \cr
 x_2 &=&\lambda_2(b_2-b_3)/2-\lambda_3 c_2,
 \end{eqnarray}
then under Gauss' composition, a representative of $[Q_2 + Q_3]$
is given
 by the primitive form
 \begin{equation}
  Q_1'=(c_1,b_1,a_1):=\left(\frac{a_2a_3}{x_1^2}, b_2+2a_2 \frac{x_2}{x_1},
 \frac{(b_1^2-D)x_1^2}{4a_2a_3} \right),
 \end{equation}
 which specializes  to Dirichlet composition (\cite{Cox} Proposition
 3.8) when $x_1=1.$ More over if we set
 \begin{eqnarray}
   x_3 &=& (2a_2x_2+(b_2-b_3)x_1)/(2a_3) \cr
   x_4 &=& (2c_2x_1 +(b_2+b_3)x_2)/(-2a_3) \cr
   y_1 &=& 0 \cr
   y_2 &=& -a_3/x_1 \cr
   y_3 &=& -a_2/x_1 \cr
   y_4 &=& (b_2+b_3)/(2x_1),
   \end{eqnarray}
   and let $A$ be the Bhargava's cube with front and back faces given
   by (3.8). Then $Q_i^A=Q_i, i=2,3$.
The algorithm involves  only a single GCD computation.
\end{theorem}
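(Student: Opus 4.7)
The plan is to specialize the inversion algorithm of Theorem 3.2 to the vector $X$ defined in (3.12) and to identify the resulting $Q_1^A$ with the form (3.14).

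First I would verify that $X \in \mathrm{Im}(\alpha_{4,2})$ and that $X$ is primitive. The Pl\"ucker relation (1.4) for this $X$ reads
\[
a_3 c_3 - a_2 c_2 + \tfrac{b_2^2 - b_3^2}{4} = 0,
\]
which is exactly the equality $\mathrm{disc}(Q_2) = \mathrm{disc}(Q_3) = D$, while primitivity of $X$ follows from that of $Q_2$ and $Q_3$. Assuming $X_{12} = -a_3 \ne 0$ (which can be arranged by replacing $Q_3$ by an equivalent form if necessary), Theorem 3.2 applies.

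Next I would unfold the algorithm of Theorem 3.2 with this particular $X$. The single gcd computation (3.4) produces $x_1$ together with the B\'ezout coefficients $\lambda_1, \lambda_2, \lambda_3$ of (3.13), and the prescriptions $y_1 = 0$ and $y_j = X_{1j}/x_1$ immediately give the values of $y_1, \ldots, y_4$ in (3.15). The formula (3.5) specializes to $x_2 \equiv \lambda_2 (b_2 - b_3)/2 - \lambda_3 c_2 \pmod{y_2}$, matching (3.13), and the recursion (3.6) for $j = 3, 4$ produces the stated $x_3, x_4$ by a direct substitution in which the $x_1$ and $a_3$ denominators cancel. By Theorem 3.2 the resulting $x, y$ satisfy $x \wedge y = X$, whence (3.11) gives $Q_2^A = Q_2$ and $Q_3^A = Q_3$.

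It remains to identify $Q_1^A(u,v) = -\det(M_1 u - N_1 v)$ with $Q_1'$. The coefficient of $v^2$ is $y_2 y_3 = a_2 a_3 / x_1^2 = c_1$. The coefficient of $uv$ is $x_1 y_4 - x_2 y_3 - y_2 x_3$; substituting the formulas for $y_2, y_3, y_4, x_3$ and simplifying, the $a_3$- and $x_1$-denominators cancel to leave $b_2 + 2 a_2 x_2 / x_1 = b_1$. The coefficient of $u^2$ is then forced to equal $a_1 = (b_1^2 - D) x_1^2 / (4 a_2 a_3)$ by invariance of the discriminant, so $Q_1^A = (a_1, b_1, c_1)$. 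The variable swap $(u,v) \mapsto (v,u)$ carries this to $Q_1' = (c_1, b_1, a_1)$; since this swap lies in $GL_2 \setminus SL_2$, $Q_1'$ represents the inverse class $-[Q_1^A] = [Q_2] + [Q_3]$, as asserted. Specialization to Dirichlet composition when $x_1 = 1$ (so $a_2, a_3$ are coprime) is then immediate from (3.14).

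The main obstacle I expect is the bookkeeping in the $uv$-coefficient, where several nested fractions must collapse cleanly to $b_1$; once that is handled, the identification of $a_1$ via discriminant invariance is automatic, and the rest of the theorem is a transcription of the algorithm of Theorem 3.2 applied to the specific $X$ of (3.12), the gcd-count being inherited verbatim.
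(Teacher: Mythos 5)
Your proposal is correct and follows essentially the same route as the paper: specialize the algorithm of Theorem 3.2 to the vector $X$ of (3.12) to obtain (3.13) and (3.15), read off $Q_1^A=(a_1,b_1,c_1)$ from (3.10), and pass to the inverse class to get $Q_1'=(c_1,b_1,a_1)$. Your final step (a single improper $GL_2$-equivalence $(u,v)\mapsto(v,u)$ inverting the class) is just the composition of the paper's two steps (negate $b_1$ to invert the class, then apply $x'=-y$, $y'=x$), so the arguments coincide.
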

\begin{proof} By applying the algorithm in Theorem
3.2 to $X \in Im(\alpha_{4,2})$ given by (3.12), we find $x,y$
with $x \wedge y =X$ from (3.4),(3.5),(3.6) which gives (3.13),
(3.15) and the cube $A$ with $Q_1=(a_1,b_1,c_1)$ from (3.10)
 where $a_1,b_1,c_1$ are as defined in (3.14). Since
 $[Q_1]=-[Q_2+Q_3]$, we get  $-Q_1=(a_1,-b_1,c_1)$ for the composed  class $[Q_2+Q_3]$,
 and  applying the $SL_2(\ZZ)$ transformation $x'=-y,y'=x$ to $-Q_1$ gives
 the more standard
equivalent form $Q_1'=(c_1,b_1,a_1)$ in (3.14). The only
algorithmic step is (3.13).
\end{proof}

\subsection{Further composition laws on cubes}
The construction above also solves the algorithmic problem of
finding the
 composition of two Bhargava's projective cubes (\cite{B} Theorem
 2). Given two projective cubes $A,A'$ we can compute by the above
primitive forms
 $Q_2,Q_3$ so that $$[Q_2]=[Q_2^A]+[Q_2^{A'}],
 [Q_3]=[Q_3^A]+[Q_3^{A'}],$$
 we can then find a cube $A''$ with $Q_i^{A''}=Q_i,i=2,3$ by the above.
 We then  have
\begin{eqnarray*}
 [Q_1^{A''}]&=&-( [Q_2^{A''}]+[Q_3^{A''}])=-([Q_2]+[Q_3]) \cr
            &=& -([Q_2^A]+[Q_2^{A'}]+[Q_3^A]+[Q_3^{A'}]) =
            ([Q_1^A]+[Q_1^{A'}]),
\end{eqnarray*}
so $[Q_i^{A''}]=[Q_i^A]+[Q_i^{A'}],i=1,2,3$, and hence
 $[A'']=[A]+[A']$ and $[A'']$ is projective since composition of
 primitive forms is primitive. As Bhargava showed in \cite{B}, the
 composition law on cubes gives rise to 4 further composition laws
 and hence the algorithm extends to these also.

 \subsection{$\alpha_{n,k}^*$ is injective} In (\cite{G} Art 234),
 Gauss proved that if $a,b,c,d$ are vectors in $\ZZ^n$ with
 $ c \wedge d = t (a \wedge b), t \in \ZZ$ and $gcd(a \wedge b)=1,$ then
 $[c,d]=[a,b]H, $ where $H \in GL_2(\ZZ), \det(H)=t$. In particular
 this implies that $\alpha_{n,2}^*$ restricted to primitive system
 of
 vectors is injective. Gauss' proof can be generalized and the
 matrix $H$ can be given explicitly. We give here the proof for the
 case $k=3$ and the explicit form of $H$ which makes it
 clear that the result holds for all $k$. We are indebted to the
 referees of ANTS VIII for the following which greatly
 simplifies our earlier proof.

 \begin{lemma} (case $k=3$)
Let $a,b,c,e,f,g$ be vectors in $\ZZ^n$ and $0 \neq t \in \ZZ$ with
$e \wedge f \wedge g = t (a \wedge b \wedge c),$ and $gcd(a \wedge b
\wedge c)=1$. Let $L \in \ZZ^n$ be such that $L \cdot (a\wedge b
\wedge c)=1$, and let $H$ be the following integral matrix (where
$\cdot$ denotes inner product)
$$ H=L \cdot \begin{pmatrix}
 e \wedge b \wedge c & f \wedge b \wedge c & g \wedge b \wedge c \cr
 a \wedge e \wedge c & a \wedge f \wedge c & a \wedge g \wedge c \cr
a \wedge b \wedge e & a \wedge b \wedge f & a \wedge b \wedge g
\end{pmatrix},$$
then we have
$$[a,b,c]H=[e,f,g], \;\;\;\; \det H=t .$$
\end{lemma}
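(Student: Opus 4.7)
The plan is to verify the matrix identity $[a,b,c]H = [e,f,g]$ column-by-column and then extract $\det H = t$ from equation (1.2). The key conceptual point is that the entries of $H$ are integral Cramer's-rule coefficients for expressing $e, f, g$ in the basis $a,b,c$, which makes sense once one checks that $e, f, g$ all lie in the real span of $a, b, c$. Since $\gcd(a \wedge b \wedge c) = 1$ forces $a \wedge b \wedge c \neq 0$, and $t \neq 0$ by hypothesis, the identity $e \wedge f \wedge g = t(a \wedge b \wedge c)$ gives, for any $v \in \ZZ^n$,
\[
v \wedge a \wedge b \wedge c \;=\; \tfrac{1}{t}\, v \wedge e \wedge f \wedge g.
\]
Taking $v \in \{e,f,g\}$ makes the right-hand side zero by repetition, and $v \wedge a \wedge b \wedge c = 0$ is the standard criterion for $v$ to lie in the $\RR$-span of $a, b, c$ (which is genuinely $3$-dimensional since $a \wedge b \wedge c \neq 0$).

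Next, writing $e = \alpha a + \beta b + \gamma c$ with (a priori) $\alpha, \beta, \gamma \in \QQ$ and using antisymmetry of the wedge yields
\[
e \wedge b \wedge c = \alpha(a \wedge b \wedge c), \quad a \wedge e \wedge c = \beta(a \wedge b \wedge c), \quad a \wedge b \wedge e = \gamma(a \wedge b \wedge c).
\]
Pairing with $L$ (which satisfies $L \cdot (a \wedge b \wedge c) = 1$) gives $\alpha = L \cdot (e \wedge b \wedge c)$, $\beta = L \cdot (a \wedge e \wedge c)$, $\gamma = L \cdot (a \wedge b \wedge e)$. These are manifestly integers, being inner products of integer vectors, and they are precisely the entries of the first column of $H$, so the first column of $[a,b,c]H$ equals $\alpha a + \beta b + \gamma c = e$. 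Repeating the argument verbatim with $f$ and $g$ supplies the remaining two columns and proves $[a,b,c]H = [e,f,g]$.

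For the determinant, applying the $3$-wedge map $\alpha_{n,3}$ to both sides of the matrix identity just proved, and invoking (1.2) (which scales the wedge by the determinant of any right-multiplier), yields
\[
(\det H)(a \wedge b \wedge c) \;=\; e \wedge f \wedge g \;=\; t(a \wedge b \wedge c),
\]
and cancelling the nonzero $a \wedge b \wedge c$ (component-wise, since its gcd is $1$) gives $\det H = t$. The only genuinely creative step is the first one, recognizing that the hypothesis traps $e, f, g$ in the three-dimensional span of $a, b, c$; once that is in hand, the $L$-pairing mechanism converts the rational expansion coefficients into integers automatically, and everything else is routine wedge-product manipulation. The same template generalizes to arbitrary $k$ by taking $H$ to be the $k \times k$ matrix whose $(i,j)$-entry is the $L$-pairing of the trivector obtained from $a_1 \wedge \cdots \wedge a_k$ by replacing the $i$-th slot with the $j$-th vector of the other system.
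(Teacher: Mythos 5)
Your proposal is correct and follows essentially the same route as the paper's own proof: both deduce from $e\wedge f\wedge g=t(a\wedge b\wedge c)$ that $e,f,g$ lie in the (three-dimensional, by $\gcd=1$) span of $a,b,c$, then read off the expansion coefficients as $L$-pairings of replaced wedge products, which are manifestly integral. Your explicit derivation of $\det H=t$ by applying the wedge map to $[a,b,c]H=[e,f,g]$ is a welcome addition that the paper leaves implicit.
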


\begin{proof}
We need to show that
\begin{equation} L \cdot (e
\wedge b \wedge c) a + L \cdot (a \wedge e \wedge c ) b + L \cdot
(a \wedge b \wedge e )c =e, \end{equation}
 for the first entry and
the rest follows similarly. Clearly, $a\wedge b \wedge c \wedge
e=0$, so we know $a,b,c,e$ are linearly dependent. But since we
assume that $gcd(a \wedge b \wedge c)=1$, $a, b, c$ are linearly
independent, so we have $e=\lambda_1 a+\lambda_2 b +\lambda_3 c$,
for $\lambda_i \in \RR$. It follows that $L \cdot (e \wedge b
\wedge c)=L \cdot ( \lambda_1 a \wedge b \wedge c)= \lambda_1 \in
\ZZ,$ and likewise for $\lambda_2,\lambda_3$.
\end{proof}

The lemma implies the following :
\begin{cor} The wedge map $\alpha_{n,k}$ gives a bijection
\begin{equation}
(\ZZ^n-0)^{k*}/SL_k(\ZZ) \cong (G_{n,k}(\ZZ))^*,
\end{equation}
where $*$ denotes restriction to nonzero primitive system of
vectors.
\end{cor}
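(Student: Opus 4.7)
The plan is to derive the corollary directly from Lemma 3.4 (in its natural $k$-variable extension) together with the definitions. Two things need to be checked: that $\alpha_{n,k}^*$ is a well-defined surjection, and that it is injective on $SL_k(\ZZ)$-orbits of primitive systems.

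Well-definedness and surjectivity are immediate from the setup. By (1.2), $\alpha_{n,k}$ is $SL_k(\ZZ)$-invariant, so it descends to the quotient. By the definition in Section 2.1, a system $X$ is primitive exactly when $\gcd(\alpha_{n,k}(X)) = 1$, so $\alpha_{n,k}^*$ carries primitive orbits into $(G_{n,k}(\ZZ))^*$, and surjectivity follows from $G_{n,k}(\ZZ) := \mathrm{Im}(\alpha_{n,k})$.

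For injectivity, suppose $(a_1, \ldots, a_k)$ and $(e_1, \ldots, e_k)$ are primitive systems with $a_1 \wedge \cdots \wedge a_k = e_1 \wedge \cdots \wedge e_k$. Apply the general-$k$ version of Lemma 3.4 with $t = 1$: since the common wedge is primitive, Bezout yields $L \in \wedge^k \ZZ^n$ with $L \cdot (a_1 \wedge \cdots \wedge a_k) = 1$, and set
$$H_{ij} = L \cdot (a_1 \wedge \cdots \wedge a_{i-1} \wedge e_j \wedge a_{i+1} \wedge \cdots \wedge a_k) \in \ZZ.$$
Wedging the hypothesis on the left with $e_j$ and using $e_j \wedge e_j = 0$ gives $e_j \wedge (a_1 \wedge \cdots \wedge a_k) = 0$, so $e_j$ lies in the real span of the $\RR$-linearly independent vectors $a_1, \ldots, a_k$: write $e_j = \sum_i \mu_{ij} a_i$ with $\mu_{ij} \in \RR$. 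Substituting into the defining formula for $H_{ij}$ collapses all but the $i$-th summand, so $H_{ij} = \mu_{ij}$; in particular each $\mu_{ij}$ is an integer and $[a_1, \ldots, a_k]\,H = [e_1, \ldots, e_k]$. Applying $\alpha_{n,k}$ to both sides and invoking (1.2) yields $(\det H)(a_1 \wedge \cdots \wedge a_k) = a_1 \wedge \cdots \wedge a_k$, so $\det H = 1$ and $H \in SL_k(\ZZ)$, which is what we needed.

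The only real obstacle is verifying the $k$-variable extension of Lemma 3.4 that is used here. The argument above is precisely that extension: it transcribes the $k=3$ proof in the excerpt with the position index running over $1, \ldots, k$, and requires nothing beyond primitivity (to produce $L$ via Bezout) and the vanishing $e_j \wedge e_j = 0$ (to force $e_j$ into the real span of the $a_i$). No additional ingredient is needed, so the corollary drops out of the lemma as soon as its general-$k$ form is established.
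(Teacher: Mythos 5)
Your proposal is correct and follows the paper's intended route exactly: the paper derives the corollary from Lemma 3.4 (stated for $k=3$ with the remark that the construction of $H$ generalizes), and you have simply written out that general-$k$ version — the Bezout choice of $L$, the collapse of $H_{ij}$ to the real coefficient $\mu_{ij}$, and the determinant computation via (1.2) — together with the routine surjectivity check. (Minor point in your favor: you correctly place $L$ in $\wedge^k\ZZ^n$ rather than $\ZZ^n$ as the paper's statement of Lemma 3.4 has it.)
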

(3.17) implies a simple counting argument which gives our formula
for $d_{n,k}$ in the introduction. We first note that restriction to
primitive system of vectors does not reduce dimension. Thinking of
$\alpha_{n,k}$ in (1.3) as a covering map mapping $\ZZ^{kn}$ onto
$G_{n,k}(\ZZ)$ with "fiber" $SL_k(\ZZ)$  depending on $k^2-1$
parameters gives us $kn-(k^2-1)$ free parameters for $G_{n,k}(\ZZ)$,
and hence the formula for $d_{n,k}$.

\section{Inverting $\alpha_{n,k}$ and representations of forms}
In (\cite{G} Art.279), Gauss gave an algorithm for inverting
$\alpha_{3,2}$. It is interesting to note that he was concerned here
with the representation of integers by ternary form, rather than his
composition law. We generalized his argument here which gives one
reason why  one may wish to invert $\alpha_{n,k}$.

Let $A$ be a fixed matrix for an $n$-ary form and $X \in (\ZZ^n)^k$,
then the well known map  $\phi_A: X \rightarrow X^TAX$ transforms
$A$ to a $k$-ary form. Two different $X$ define equivalent $k$-ary
form if and only if they differ by an $SL_k(\ZZ)$ transform, so one
should think of $X$ as lying in the integral Grassmannian
$G_{n,k}(\ZZ)$. Furthermore, as we shall show, $A$ induces a natural
metric on $G_{n,k}(\ZZ)$ so that $\phi_A$ becomes volume preserving.

 For $X=[v_1,...,v_k],Y=[w_1,...,w_k] \in (\ZZ^n)^k$, (2.1) gives a "flat" semi-definite pairing.
 The matrix $A$ induces
correspondingly a semi-definite pairing on $(\ZZ^n)^k$, given by
\begin{equation}<X,Y>_A:=\det(X^TAY)=\sum_{|I|,|J|=k} \hat{A}_{IJ} X_I
Y_J, \end{equation} where $\hat{A}$ is the $k$-skew symmetric
adjoint of $A$ with $IJ$ component given by

\begin{equation} \hat{A}_{IJ}=  \det \begin{pmatrix} A_{i_1 j_1} & \cdots
A_{i_1,j_k} \cr ... & ... \cr
 A_{i_k,j_1} & \cdots A_{i_k j_k}  \end{pmatrix}.
 \end{equation}
(4.1) follows from applying the Cauchy-Binet identity (2.1) twice,
and hence it is an algebraic identity which holds even when $A$ is
not necessarily symmetric or positive definite.

Putting $Y=X$, (4.1) says that the representation of a $k$-ary form
\begin{equation}\phi_k=X^TAX \end{equation} by an $n$-ary form $A$
induces a representation of the integer $\det \phi_k$ by the
$n_k$-ary adjoint form $\hat{A}$ of $A$,
\begin{equation}\det \phi_k =x^T \hat{A} x,
\end{equation}
by setting for  $X=[v_1,...,v_k]$,
\begin{equation}
 x=v_1 \wedge ... \wedge v_k .
 \end{equation}

 However passing conversely from (4.4) to (4.3) is only
possible when one can invert $\alpha_{n,k}$ at $x$
 in (4.5), which also requires
 $ x \in Im(\alpha_{n,k})=G_{n,k}(\ZZ)$. Theorem 3.2 gives an
 algorithm to do this when $k=2$ and Lemma 2.1 gives the algorithm
 for the codimension one case where $\alpha_{n,n-1}$ is always surjective.

It seems interesting to note that the above can be viewed
geometrically as giving a natural $\hat{A}$-norm on the integral
Grassmannian $G_{n,k}(\ZZ)$ so that $\phi_A$ maps vectors of norm
$m$ into volume $m$ $k$-dimensional sublattices of the integral
lattice $\Gamma_A$ with Gram matrix $A$.
\begin{lemma} Let $A$ be an integral symmetric positive definite $n \times n$ matrix and $\Gamma_A$
 the (classical) integral lattice with
  Gram matrix $A$. Let $\hat{A}$ be the $k$-adjoint of $A$  as given in
  (4.2) which induces a natural metric on the integral Grassmannian
  $G_{n,k}(\ZZ)$. Let $G_{n,k}(\ZZ)^*_{\hat{A},m}=\{ x : x^T\hat{A}x=m, gcd(x)=1 \}$ be
  the
  primitive vectors in $G_{n,k}(\ZZ)$ with $\hat{A}$-norm $||x||^2_{\hat{A}}:=x^T\hat{A}x=m$.
  Let $\Gamma^*_{A,k,m}=\{ [X^TAX]_{SL_k(\ZZ)} : \det(X^TAX)=m, gcd(X)=1\}$,be the
  isomorphism classes of primitive $k$ dimensional sublattices of
  $\Gamma_A$ of determinant m. Then there is a natural map
$$ \phi_{A,k,m} : G_{n,k}(\ZZ)^*_{\hat{A},m} \rightarrow \Gamma^*_{A,k,m}$$ mapping
$$ x=v_1 \wedge \cdots \wedge v_k \rightarrow [X^TAX].$$
\end{lemma}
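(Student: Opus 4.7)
The plan is to verify that $\phi_{A,k,m}$ is a well-defined map, which reduces to three checks: that each $x$ admits a preimage under $\alpha_{n,k}$, that the resulting $SL_k(\ZZ)$-equivalence class of Gram matrices is independent of the preimage, and that the image lies in $\Gamma^*_{A,k,m}$. No deep new work is required; the main technical input is the injectivity of $\alpha_{n,k}^*$ on primitive systems supplied by Corollary 3.5, with everything else following from Cauchy--Binet and the primitivity criterion of Section 2.1.

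For existence, the hypothesis $x \in G_{n,k}(\ZZ)^*_{\hat{A},m} \subseteq \mathrm{Im}(\alpha_{n,k})$ furnishes at least one $X=[v_1,\ldots,v_k] \in (\ZZ^n)^k$ with $\alpha_{n,k}(X)=x$. Since $\gcd(x)=1$, the system $X$ is primitive, hence extends to a $GL_n(\ZZ)$ matrix, so the column span $\langle v_1,\ldots,v_k\rangle$ is a primitive $k$-dimensional sublattice of $\ZZ^n$; viewed inside $\Gamma_A$, it carries the Gram matrix $X^TAX$. To see independence of choice, suppose $X'$ is a second primitive preimage of $x$: by Corollary 3.5, $X' = XU$ for some $U \in SL_k(\ZZ)$, so $(X')^TAX' = U^T(X^TAX)U$, which is precisely the $SL_k(\ZZ)$-action on the Gram matrix. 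Hence $[X^TAX]_{SL_k(\ZZ)}$ depends only on $x$.

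Finally, the algebraic identity (4.4)---obtained by a double application of Cauchy--Binet in (4.1) with $Y=X$---gives $\det(X^TAX) = x^T\hat{A}x = m$, since $x$ has $\hat{A}$-norm $m$. Combined with the primitivity observation above, this places $[X^TAX]$ in $\Gamma^*_{A,k,m}$ and completes the verification. The only genuine obstacle, namely well-definedness of the class $[X^TAX]$ modulo $SL_k(\ZZ)$, is immediately dispatched by Corollary 3.5; beyond that the argument is essentially bookkeeping in Cauchy--Binet.
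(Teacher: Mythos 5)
Your proposal is correct and follows exactly the route the paper intends: the paper gives no separate proof of this lemma, but the surrounding discussion (the double Cauchy--Binet identity (4.1) specialized to $Y=X$, giving $\det(X^TAX)=x^T\hat{A}x$) together with the injectivity of $\alpha_{n,k}^*$ from Corollary 3.5 supplies precisely the three checks you carry out. Nothing is missing.
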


We also note that there is an obvious injection
$$\Gamma^*_{A,k,m} \subset H_k(m)=\{ [\phi_k] : \det (\phi_k)=m \},$$
and it is onto if and only if every classical integral
$k$-dimensional primitive lattice of determinant $m$ can be embedded
as a sublattice of $\Gamma_A$. This is known to be true for example
for $A=I,k \le 5$ and $n \ge k+3$ \cite{Ko,M}, by Chao Ko and
Mordell, the case $k=1,n=4$ being Lagrange theorem that every
integer is a sum of 4 four squares.

\section{Acknowledgements}
This paper was first reviewed for ANTS VIII. We are very much
indebted to the referees of (ANTS VIII and IX) for pointing out an
error in our initial version of Theorem 3.2, and for simplifying the
proof of Lemma 3.4, as well as giving many useful comments and
suggestions, which greatly improve our presentations. Some of our
results were found while preparing a talk at the Max-Planck-Institut
f\"{u}r Mathematik in July 2008. We thank the institute for
hospitality.

\bibliographystyle{amsplain}

\end{document}